\def\P{\mathcal{P}}
\def\I{\mathcal{I}}
\newcommand{\dgavf}{\dg_{\mathrm{AVF}}}
\newcommand{\dgci}{\dg_{\mathrm{CI}}}
\newcommand{\dgsci}{\dg_{\mathrm{SCI}}}
\newcommand{\dg}{\overline{\nabla}}
\newcommand{\rank}{\ensuremath{\mathrm{rank}}}
\newcommand{\Tb}{\ensuremath{T}}
\newtheorem{theorem}{Theorem}[section]
\newtheorem{lemma}[theorem]{Lemma}
\newtheorem{definition}[theorem]{Definition}
\newtheorem{example}[theorem]{Example}
\numberwithin{equation}{section}
\numberwithin{table}{section}
\numberwithin{figure}{section}
\title{Preserving multiple first integrals by \\discrete gradients}
\author{Morten Dahlby \and Brynjulf Owren \and Takaharu Yaguchi}
\begin{document}
\maketitle

\begin{abstract}
 We consider systems of ordinary differential equations with known first integrals.  The notion of a discrete tangent space is introduced as the orthogonal complement of an arbitrary set of discrete gradients. Integrators which exactly conserve all the first integrals simultaneously are then defined. 
 In both cases we start from an arbitrary method of a prescribed order (say, a Runge-Kutta scheme) and modify it using two approaches: one based on projection and one based one local coordinates.
 The methods are tested on the Kepler problem. 
 \end{abstract}
 
 \section{Introduction}
 A system of ordinary differential equations which preserves a first integral $H(y)$ can be written in the form
 \begin{equation} \label{eq:ipode}
 \dot{y}=f(y) = S(y)\nabla H(y),\ y\in\mathbb{R}^m,
 \end{equation}
where $S(y)$ is an antisymmetric matrix, see \cite{mclachlan99giu}.
 An approximate numerical solution, $y^n\approx y(t^n),\ n\geq 1$, is said to be integral preserving if
 $H(y^n)=H(y^0),\ n\geq 1$.  Energy preserving methods go all the way back to the seminal paper by Courant, Friedrichs, Lewy \cite{courant28udp}, where an energy preserving difference scheme is derived and the property is used to prove convergence of the scheme. Recently, it has become increasingly popular to study conservative schemes as discrete dynamical systems in their own right, attempting to mimic properties of the continuous system by introducing suitable discrete counterparts. An example of importance in this note is the replacement of the   gradient in \eqref{eq:ipode} by a discrete gradient operator.
 
 The discrete gradient method is one of the most prevalent approaches in the literature. It was first systematically treated by Gonzalez \cite{gonzalez96tia} and McLachlan et al.\ \cite{mclachlan99giu}.
 The idea is to introduce a discrete approximation to the gradient, letting $\dg H:\mathbb{R}^m\times\mathbb{R}^m\rightarrow\mathbb{R}^m$ be a continuous map satisfying
 \begin{align*}
     H(u)-H(v)&=\dg H(v,u)^{\top} (u-v), \\
     \dg H(u,u) &= \nabla H(u).
 \end{align*}
 The existence of such discrete gradients is well established in the literature, see for instance the monograph by Hairer et al.\ \cite{hairer06gni} or the papers \cite{gonzalez96tia} and \cite{mclachlan99giu}. Their construction is not unique,  we give here two different examples.
 The Averaged Vector Field (AVF) gradient \cite{MR2451073} is defined as
 \begin{equation} \label{eq:avf}
    \dgavf H (v,u) = \int_0^1 \nabla H(\xi u + (1-\xi)v)\,\mathrm{d}\xi.
 \end{equation}
 The coordinate increment method \cite{itoh88hcd} is defined in terms of the coordinates of the vectors $v$ and $u$, the $i$th component of 
 $\dg H(v,u)$ is then given as
 \begin{equation} \label{eq:itohabe}
 (\dgci H(v,u))_i = \frac{H(u_1,\ldots,u_{i},v_{i+1},\ldots,v_m)-H(u_1,\ldots,u_{i-1},v_{i},\ldots,v_m)}
 {u_{i}-v_{i}}.
 \end{equation}
 An important difference between these two discrete gradients is that \eqref{eq:avf} is symmetric, 
 $\dgavf H(v,u)=\dgavf H(u,v)$, while \eqref{eq:itohabe} is not. However, note that a symmetric version of the coordinate increment discrete gradient can be constructed by 
 \begin{equation} \label{eq:symitohabe}
 	\dgsci H(v,u)=\frac12\left(\dgci H(v,u)+\dgci H(u,v)\right).
 \end{equation}
 Once a discrete gradient has been found, one immediately obtains an integral preserving method by simply letting
 \[
     \frac{y^{n+1}-y^n}{h} = \overline{S}(y^n,y^{n+1})\,\dg H(y^n, y^{n+1}).
 \]
 Here $h$ is the time step, and $\overline{S}(y^n,y^{n+1})$ is some skew-symmetric approximation to the matrix $S$, one would normally require that $S(y)=\overline{S}(y,y)$. We remark that discrete gradient methods are implicit.
 
 In this note we consider the case where there are more than one first integral and the objective is to preserve any number of such invariants simultaneously. Some earlier attempts to achieve this include the papers \cite{mclachlan99giu, quispel99son} in which the antisymmetric matrix $S(y)$ is replaced by an antisymmetric tensor taking discrete gradients of all integrals to be preserved as input. A formula for this antisymmetric tensor is given. Another approach is an integrator for a class of separable Hamiltonian systems (\cite{minesaki06nni} and references therein), where the integrator which preserves all integrals is designed based on separation of variables by the Kustaanheimo--Stiefel transformation \cite{stiefel71lar}. This transformation was also adopted in the development of the ``exact'' integrator for the Kepler problem by Kozlov (\cite{kozlov07cdo}, see also \cite{cieslinski07}). 
It would be also noteworthy that Labudde and Greenspan \cite{labudde74dma} proposed an energy-and-angular-momentum-preserving integrator for the differential equations of motion of classical mechanics, and they developed similar integrators of high order of convergence in the sequels \cite{labudde76eai, labudde76eat}.
Energy and linear momentum are exactly conserved, but for the system case \cite{labudde76eat} angular momentum is not. Another approach is used by Simo et al.\ in \cite{MR1187632} to develop schemes that preserve energy and momentum. 

We shall instead present an approach which does not rely on finding such a tensor nor a structure of the equation, we only assume knowledge of the first integrals to be preserved as well as the ODE vector field itself. Examples of such invariants are the energy and momentum, but our approach is not limited to these. The discrete gradients are essential to the algorithm we develop, but note that the methods themselves are not discrete gradient methods in the usual sense. 
After defining the general method, we present two particular cases based on projection and local coordinates, respectively. Both use an underlying scheme of arbitrary order $p$, we prove that the resulting schemes retain this order, regardless of the choice of discrete gradient. 

In Section \ref{se:numex} we apply  the new methods to the Kepler problem, a system with four degrees of freedom and three independent first integrals. 
We illustrate our approach by preserving combinations of one or more of these three integrals. An interesting question is whether the present methods based on discrete gradients perform better than the more standard projection methods which make use of the exact gradients of the first integrals.
 We show two examples where the different approaches are compared.
 In the examples we use Runge-Kutta methods of different order as the underlying schemes.
The Kepler problem is of course a well-known and popular test case, and methods which preserve one or more integrals for this particular problem can be found in e.g.   \cite{cieslinski07, cieslinski10, kozlov07cdo, minesaki04anc}, the methods in these references are derived by means of the Kustaanheimo--Stiefel transformation.

 \section{Preserving multiple invariants}
 
 Suppose that an ODE system \eqref{eq:ipode} possesses  $q\geq 1$ independent first integrals, $H_1(y),\ldots,H_q(y)$. These invariants foliate $\mathbb{R}^m$ into $(m-q)$-dimensional submanifolds (leaves)
 \[
    M=M_c = \{y\in\mathbb{R}^m : H_1(y)=c_1, H_2(y)=c_2,\ldots,H_q(y)=c_q\}.
 \]
 The tangent space $T_yM$ of $M$ at $y$ is the orthogonal complement to
 \[
 	\text{span}\{\nabla H_1(y),\ldots,\nabla H_q(y)\}. 
\]	
For simplicity we write only $M$ for $M_c$ for the rest of this paper.
 
 \begin{definition}
Let $\dg$ be a fixed discrete gradient operator and let $H_1,\ldots,H_q$ be independent first integrals.
The discrete tangent  space at  $(u,v)\in \mathbb{R}^m\times\mathbb{R}^m$ is
\[
    T_{(v,u)}M = \{\eta\in\mathbb{R}^m : \langle\dg H_j(v,u),\eta\rangle=0,\ 1\leq j\leq q\}.
\]
A vector $\eta=\eta_{(v,u)}\in T_{(v,u)}M$ is called a discrete tangent vector.
 \end{definition}
 \noindent Note that this definition causes $T_{(y,y)}M=T_yM$. 
 \begin{lemma} \label{lem:cond}
 Any integrator satisfying
 \[
      y^{n+1}-y^n = \eta_{(y^n,y^{n+1})} \in T_{(y^{n},y^{n+1})}M
 \]
 preserves all integrals, in the sense that $H_i(y^{n+1})=H_i(y^n),\ 1\leq i\leq q$.
 \end{lemma}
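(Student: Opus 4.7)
My plan is to proceed by direct computation, invoking the two defining properties of the discrete gradient operator together with the definition of the discrete tangent space. The statement is essentially a formal consequence of these definitions, so I expect no serious obstacle — the main task is to arrange the identities in the right order.

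First I would fix an index $i$ with $1\leq i\leq q$ and apply the defining identity of the discrete gradient to $H_i$ evaluated at the pair $(y^n, y^{n+1})$, obtaining
\[
 H_i(y^{n+1}) - H_i(y^n) = \dg H_i(y^n, y^{n+1})^{\top}\bigl(y^{n+1}-y^n\bigr).
\]
Next I would substitute the hypothesis of the lemma, namely $y^{n+1}-y^n = \eta_{(y^n,y^{n+1})}$, on the right-hand side, so that the increment factor becomes the discrete tangent vector.

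Then I would invoke the definition of $T_{(y^n,y^{n+1})}M$: since $\eta_{(y^n,y^{n+1})}$ lies in this discrete tangent space, the inner product $\langle \dg H_j(y^n,y^{n+1}),\eta_{(y^n,y^{n+1})}\rangle$ vanishes for every $j$, and in particular for $j=i$. This forces $H_i(y^{n+1}) - H_i(y^n) = 0$. Since $i$ was arbitrary, all $q$ invariants are conserved from step $n$ to step $n+1$, and the conclusion $H_i(y^{n+1})=H_i(y^n)$ for $1\leq i\leq q$ follows for every $n$ by induction on $n$.

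The only conceivable subtlety is making sure that the orientation of the discrete gradient arguments matches: the identity $H(u)-H(v)=\dg H(v,u)^{\top}(u-v)$ must be applied with $v=y^n$ and $u=y^{n+1}$, which is exactly the convention adopted in the definition of $T_{(v,u)}M$. So the argument is essentially a one-line verification once the notation is lined up correctly.
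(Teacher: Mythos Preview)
Your proposal is correct and follows exactly the same approach as the paper: apply the defining identity of the discrete gradient to $H_i$ at the pair $(y^n,y^{n+1})$, substitute the hypothesis $y^{n+1}-y^n=\eta_{(y^n,y^{n+1})}$, and use membership in the discrete tangent space to conclude that the resulting inner product vanishes. The paper's proof is literally this one-line chain of equalities; your remarks on argument ordering and the induction on $n$ are fine but not needed for the statement as written.
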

 \begin{proof} 
 For any $i$ we compute
\begin{multline*}
   H_i(y^{n+1})-H_i(y^n) = \dg H_i(y^n,y^{n+1})^{\top}(y^{n+1}-y^n) \\= \dg H_i(y^n,y^{n+1} )^{\top}\eta_{(y^n,y^{n+1})}=0.
 \end{multline*}
 \end{proof}
 This way of devising integral preserving schemes is similar, but slightly different to the presentation in e.g.\ \cite{hairer06gni}, \cite{mclachlan99giu} and \cite{MR2451073}.
 In this paper we will outline two ways of ensuring that the condition of Lemma~\ref{lem:cond} is satisfied -- projection and local coordinates. We emphasise, however, that there are other ways of satsifying Lemma~\ref{lem:cond}. One example taken from \cite{mclachlan99giu} is
 \begin{equation*}
 \eta_{(y^n,y^{n+1})}=h\overline{S}(y^n,y^{n+1})\overline{\nabla}H_1(y^n,y^{n+1})\dots\overline{\nabla}H_q(y^n,y^{n+1}),
 \end{equation*}
where $\overline{S}$ is a $q$-dimensional skew-symmetric tensor. 
 
 \subsection{Projection}\label{se:proj}
 We consider \eqref{eq:ipode}  
with the first integrals
$H_1(y),\dots,H_q(y)$.
We propose the projection scheme 
\begin{align}\label{scm:A}
& u^{n+1} =  \phi_h(y^{n}), \quad
 y^{n+1} = y^{n} + \P(y^{n},y^{n+1}) (u^{n+1} -y^{n})
\end{align}
where $\phi_h$ is the discrete flow that defines an arbitrary method of order $p$
\begin{align*}
 y(t + h) - u^{n+1} =   y(t+h) - \phi_h(y(t))  = \mathcal{O}(h^{p+1}), 
\end{align*}
and $\P(y^{n}, y^{n+1})$ is a smooth projection operator onto the discrete tangent space $T_{(y^{n}, y^{n+1})} M$.  
An alternative method is 
\begin{align}\label{scm:B}
& y^{n+1} = y^{n} + h \P(y^{n}, y^{n+1}) \psi_h(y^{n}, y^{n+1})
\end{align}
where $\psi_h$ is the increment function that defines a method of the form
\begin{align*}
 y^{n+1} =  y^{n} + h\psi_h(y^{n}, y^{n+1}).
\end{align*}
This method is itself assumed to be of order $p$, that is
\begin{align}\label{eqn:b-order-p}
  y(t+h) -  y(t) - h\psi_h(y(t), y(t+h)) = \mathcal{O}(h^{p+1}). 
\end{align}
We remark in passing that if both the discrete gradients $\overline{\nabla}H_i$ and the increment function $\psi_h$ are symmetric, then the method \eqref{scm:B} is symmetric. 

\begin{example}
Using Runge-Kutta as the underlying scheme $\phi_h$ we can construct examples of \eqref{scm:A}. The unprojected solution is given as
\begin{equation*}
	u^{n+1}=y^n+h\sum_{i=1}^sb_{i}k_i,
\end{equation*}
where $k_1,\dots,k_s$ are the solutions to the (possibly implicit) equations
\begin{equation*}
	k_i=f\left(y^n+h\sum_{j=1}^sa_{ij}k_j\right).
\end{equation*}
Even if the Runge-Kutta scheme is explicit the scheme \eqref{scm:A} will be implicit since $y^{n+1}$ appears in the projection operator
\begin{equation*}
 	y^{n+1} = y^{n} + h\P(y^{n},y^{n+1}) \left(\sum_{i=1}^sb_ik_i\right).
\end{equation*}

The difference between \eqref{scm:A} and \eqref{scm:B} is subtle, but to illustrate that they are in fact distinct we consider the implicit midpoint method as the underlying scheme and we get for the two methods
\begin{align*}
 	y^{n+1} &= y^{n} + h\P(y^{n},y^{n+1}) f\left(\frac{y^n+u^{n+1}}2\right),\\
 	y^{n+1} &= y^{n} + h\P(y^{n},y^{n+1}) f\left(\frac{y^n+y^{n+1}}2\right), 
\end{align*}
where in the former method $u^{n+1}$ is computed by solving
\begin{align*}
 u^{n+1} = y^{n} + h f\left(\frac{y^n+u^{n+1}}2\right). 
\end{align*}
\end{example}

\begin{theorem}\label{thm:A}
The schemes \eqref{scm:A} and \eqref{scm:B} are of order $p$, that is 
\begin{align}
\begin{split}
 y(t+h) - y(t) - \P(y(t), y(t+h)) (u^{n+1} -y(t)) &= \mathcal{O}(h^{p+1}),\\ u^{n+1} &= \phi_h(y(t)),
 \end{split}
 \label{eq:ordscmA}
 \end{align}
 and
 \begin{equation}
  y(t+h) - y(t) - h \P(y(t), y(t+h)) \psi_h(y(t), y(t+h)) = \mathcal{O}(h^{p+1}).\label{eq:ordscmB}
\end{equation}
\end{theorem}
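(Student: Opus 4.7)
The plan is to exploit a single key observation: the exact flow increment $y(t+h)-y(t)$ already lies in the discrete tangent space at $(y(t),y(t+h))$, so the projection $\P(y(t),y(t+h))$ fixes it. This turns both error estimates into applications of $\P$ to the local truncation error of the underlying method.

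First I would verify the key observation. Since each $H_j$ is a first integral, $H_j(y(t+h)) = H_j(y(t))$ along the exact flow. Using the defining property of a discrete gradient,
\begin{equation*}
0 = H_j(y(t+h)) - H_j(y(t)) = \dg H_j(y(t),y(t+h))^\top (y(t+h)-y(t)),
\end{equation*}
for $1\leq j\leq q$, so $y(t+h)-y(t)\in T_{(y(t),y(t+h))}M$ by the definition of the discrete tangent space. Consequently $\P(y(t),y(t+h))(y(t+h)-y(t)) = y(t+h)-y(t)$.

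Given this, both estimates are immediate. For \eqref{eq:ordscmA}, with $u^{n+1}=\phi_h(y(t))$, I would write
\begin{align*}
 y(t+h)&-y(t) - \P(y(t),y(t+h))(u^{n+1}-y(t)) \\
 &= \P(y(t),y(t+h))\bigl((y(t+h)-y(t)) - (u^{n+1}-y(t))\bigr) \\
 &= \P(y(t),y(t+h))\bigl(y(t+h)-\phi_h(y(t))\bigr) = \mathcal{O}(h^{p+1}),
\end{align*}
where the last equality uses that $\phi_h$ is of order $p$ and that $\P$, being a smooth projection, is bounded on a neighbourhood of the exact trajectory. The argument for \eqref{eq:ordscmB} is identical, replacing $u^{n+1}-y(t)$ with $h\psi_h(y(t),y(t+h))$ and using \eqref{eqn:b-order-p}.

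There is no real obstacle; the only thing that could be called subtle is noticing that the exact increment belongs to the discrete tangent space (not merely to $T_{y(t)}M$), which is exactly what makes the discrete gradient framework work here. The boundedness of $\P$ is a standing assumption implicit in the word \emph{smooth} in the definition of the schemes, and should perhaps be mentioned explicitly at the start of the proof.
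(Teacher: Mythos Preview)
Your proof is correct and rests on exactly the same observation as the paper's: the exact increment $y(t+h)-y(t)$ is orthogonal to every $\dg H_j(y(t),y(t+h))$ because the $H_j$ are conserved along the flow. The paper phrases this via the complementary projection, writing $\P=\I-(\I-\P)$ and showing $(\I-\P)(u^{n+1}-y(t))=\mathcal{O}(h^{p+1})$, whereas you use the equivalent statement $\P(y(t+h)-y(t))=y(t+h)-y(t)$ to pull $\P$ outside; the two arguments are the same in substance, with yours slightly more streamlined.
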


\begin{proof}
We use the shorthand notation $\P$ for $\P(y(t),y(t+h))$ in this proof. To prove \eqref{eq:ordscmA}, we compute
\begin{align*}
& y(t+h) - y(t) - \P (u^{n+1} -y(t)) \\
& = y(t+h) - y(t) - (\I - (\I - \P)) (u^{n+1} -y(t)) \\
& = y(t+h) - y(t) -  (u^{n+1} -y(t)) + (\I - \P) (u^{n+1} -y(t)). 
 \end{align*}
Since $\phi_h$ is of order $p$, we have
\begin{align}\label{eqn:a-order-p}
 y(t+h) - y(t)
 - (u^{n+1} - y(t)) 
=  y(t+h) - u^{n+1} =
\mathcal{O}(h^{p+1}). 
\end{align}
Therefore if we have
\begin{align*}
(\I - \P) (u^{n+1} -y(t)) = \mathcal{O}(h^{p+1}), 
\end{align*}
the proof is completed. This estimate is obtained in the following way. 
Because the image of $\I - \P(y(t), y(t+h))$ is spanned by 
\begin{equation*}
	\left\{\dg H_1 (y(t), y(t+h)),\dots,\dg H_q (y(t), y(t+h))\right\},
\end{equation*} 
it is enough to show 
\begin{align*}
\dg H_i (y(t), y(t+h)) \cdot (u^{n+1} -y(t)) = \mathcal{O}(h^{p+1}). 
\end{align*}
From \eqref{eqn:a-order-p} we obtain
\begin{align*}
  u^{n+1} - y(t)
 = y(t+h) - y(t)
 + \mathcal{O}(h^{p+1}), 
\end{align*}
and hence
\begin{align*}
& \dg H_i (y(t), y(t+h)) \cdot (u^{n+1} -y(t))  \\
& = \dg H_i (y(t), y(t+h)) \cdot (y(t+h) - y(t) + \mathcal{O}(h^{p+1})) \\
& = H_i (y(t+h)) - H_i(y(t)) + \mathcal{O}(h^{p+1}) \\
& =  \mathcal{O}(h^{p+1}). 
\end{align*}
The last equality is from the conservation property of the original equation. 
The proof of \eqref{eq:ordscmB} is almost identical and therefore omitted.
\end{proof}

\noindent We remark that in what we call standard projection methods $u^{n+1}$ is projected orthogonally onto the manifold $M$ by computing
\begin{equation*}
\min\| y^{n+1} - u^{n+1}\|  \quad \mbox{subject to } y^{n+1} \in {M}.
\end{equation*}
This can be achieved for instance by using Lagrange multipliers (see \cite{hairer06gni}, for instance). This approach differs from ours since we project along the discrete gradients which depend on the end point $y^{n+1}$.

\paragraph{Computing the projector.} A simple and straightforward way of obtaining the projector $\mathcal{P}(y^n,y^{n+1})$ is as follows:
Define the $(q\times m)$-matrix $Y=Y(y^n,y^{n+1})$ whose columns are the discrete gradients $\dg H_i(y^n,y^{n+1}),\ i=1,\ldots,q$. Compute a reduced $QR$-decomposition $QR=Y$ where $Q\in\mathbb{R}^{m\times q}$ and $R\in\mathbb{R}^{q\times q}$. Then define the projection matrix as $\mathcal{P}(y^n,y^{n+1})=I-QQ^{\top}$.

 \subsection{Local coordinates} 
The local coordinates approach presented here is basically of the same type as the
standard method by Potra and Rheinboldt (\cite{MR1142054}, see also \cite{hairer06gni, MR1142057}). One important difference is that
our local coordinates are algorithmically constructed by using discrete gradients.
We also present an "automatic differentiation" algorithm of the coordinate map, which could be used to increase the efficiency of the computations.  

 Inspired by \cite{celledoni02aco}, 
 we consider local coordinates on a chart containing
$y^0$ by defining a map $\eta\mapsto y=\chi(\eta)$. The map is defined implicitly by 
\begin{equation} \label{eq:imprep}
\chi(\eta)=y:\ y-y^0=\Tb(y^0,y)\eta,
\end{equation}
where $\Tb(y^0,y)$ is a smooth $m\times (m-q)$-matrix whose columns form a basis for the left nullspace (orthogonal column complement) of
the matrix $Y(y^0,y)=[\dg H_1(y^0,y),\dots,\dg H_q(y^0,y)]$. We suppress the dependency on $y^0$ and use the shorthand notation $T(y)$ and $Y(y)$ for the rest of this paper. 
\begin{lemma}\label{lem:atlas}
Suppose that ${\nabla}H_1(y^0), \dots, {\nabla}H_q(y^0)$ are linearly independent for all $y^0 \in M$.
Suppose also that for all $y^0 \in M$, $\dg H_1(y^0,y), \dots, \dg H_q(y^0,y)$ are 
$\mathrm{C}^\infty$ with respect to $y$.
Then the following statements hold.
\begin{enumerate}
\item $\eqref{eq:imprep}$ defines a one-to-one map $\omega_{y^0}$ in a neighborhood ${N}_{y^0} \subset M$.
$\omega_{y^0}$ and $\omega^{-1}_{y^0}$ are $\mathrm{C}^\infty$. 
\item The collection of the pairs $\{(N_{y^0}, \omega_{y^0}) \mid y^0 \in M\}$ forms an atlas of $M$. 
\end{enumerate}
\end{lemma}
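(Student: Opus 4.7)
The plan is to set up \eqref{eq:imprep} as an implicit equation and invoke the implicit function theorem to obtain a smooth coordinate map $\chi$ near each $y^0$, then use the defining identity of the discrete gradient to verify that the image of $\chi$ lies in $M$, and finally assemble the charts into an atlas. The linear independence assumption on the continuous gradients, together with the continuity of $\dg H_j$ at the diagonal, guarantees that $Y(y^0,y)$ has full column rank in a neighborhood of $y^0$, so one can pick a $\mathrm{C}^\infty$ matrix $\Tb(y^0,y)$ whose columns form an orthonormal basis of the left nullspace of $Y(y^0,y)$ (for instance via the QR construction already used in Section~\ref{se:proj}).

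Define $F:\mathbb{R}^{m-q}\times\mathbb{R}^m\to\mathbb{R}^m$ by $F(\eta,y)=y-y^0-\Tb(y^0,y)\eta$. Then $F(0,y^0)=0$ and the Jacobian
\[
\frac{\partial F}{\partial y}(0,y^0)=I-\left.\frac{\partial}{\partial y}\bigl(\Tb(y^0,y)\eta\bigr)\right|_{\eta=0}=I
\]
is invertible. The implicit function theorem produces an open neighborhood $U\subset\mathbb{R}^{m-q}$ of $0$ and a unique $\mathrm{C}^\infty$ map $\chi:U\to\mathbb{R}^m$ with $\chi(0)=y^0$ and $F(\eta,\chi(\eta))\equiv0$. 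Differentiating the identity $\chi(\eta)-y^0=\Tb(y^0,\chi(\eta))\eta$ at $\eta=0$ gives $\mathrm{d}\chi|_0=\Tb(y^0,y^0)$, which has rank $m-q$, so $\chi$ is a local diffeomorphism onto its image.

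Next I would verify that $\chi(U)\subset M$. From the discrete gradient identity and the relation $\chi(\eta)-y^0=\Tb(y^0,\chi(\eta))\eta$,
\[
H_i(\chi(\eta))-H_i(y^0)=\dg H_i(y^0,\chi(\eta))^{\top}\Tb(y^0,\chi(\eta))\eta=0,
\]
because the columns of $\Tb(y^0,y)$ are orthogonal to every $\dg H_i(y^0,y)$ by construction. Setting $N_{y^0}=\chi(U)$ and $\omega_{y^0}=\chi^{-1}$ yields the chart claimed in part~1, with $\omega_{y^0}$ and $\omega_{y^0}^{-1}$ both smooth. For part~2, the family $\{N_{y^0}\}_{y^0\in M}$ covers $M$ since $y^0\in N_{y^0}$, and any transition map $\omega_{y^1}\circ\omega_{y^0}^{-1}$ is a composition of $\mathrm{C}^\infty$ maps on its domain of definition, proving compatibility.

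The only real subtlety is the smooth choice of $\Tb(y^0,y)$; everything after that is a direct application of the implicit function theorem together with the defining property of $\dg$. The conservation identity used in the proof of Lemma~\ref{lem:cond} is exactly what forces $\chi(U)\subset M$, and linear independence of $\nabla H_1(y^0),\dots,\nabla H_q(y^0)$ is used only to guarantee that $\Tb$ is well-defined and smooth near $y^0$.
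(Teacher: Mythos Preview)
Your argument is correct and in fact somewhat more explicit than the paper's, though the two take opposite routes through the implicit/inverse function theorem. The paper first writes down the closed formula $\eta=(T^\top T)^{-1}T^\top(y-y^0)$ for $\omega_{y^0}$ on a neighborhood in $\mathbb{R}^m$, checks it is $\mathrm{C}^\infty$, shows its Jacobian at $y^0$ has rank $m-q$, and then invokes the implicit function theorem to obtain the inverse $\chi=\omega_{y^0}^{-1}$ after restricting to $M$. You instead apply the implicit function theorem directly to $F(\eta,y)=y-y^0-T(y^0,y)\eta$, exploiting that $\partial F/\partial y|_{(0,y^0)}=I$, to produce $\chi$ first, and only then invert using $\mathrm{d}\chi|_0=T(y^0,y^0)$.

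What your route buys is an explicit verification that $\chi(U)\subset M$ via the discrete gradient identity $H_i(\chi(\eta))-H_i(y^0)=\dg H_i(y^0,\chi(\eta))^\top T(y^0,\chi(\eta))\eta=0$. The paper's proof leaves this step implicit: its pseudoinverse formula for $\eta$ agrees with \eqref{eq:imprep} on $M$ precisely because $y-y^0$ lies in the range of $T(y)$ there, which is again a consequence of the discrete gradient identity, but this is not spelled out. Both arguments rest on the same two ingredients you isolate at the end: linear independence of the $\nabla H_i(y^0)$ to get a smooth local choice of $T$, and the defining property of $\dg$ to force the image onto $M$.
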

\begin{proof}
\begin{enumerate}
\item 
From the continuity of the discrete gradients, we deduce that
for all $y^0 \in M$, there exists a neighborhood $\tilde{N}_{y^0}$ in $\mathbb{R}^m$ of $y^0$ in which $\dg H_1(y^0,y), \ldots, \dg H_q(y^0,y)$ are linearly independent.  
For $y \in \tilde{N}_{y^0}$, $\Tb(y)$ admits the QR decomposition $\Tb(y) = QR$ and $\eta$ is obtained by $\eta = (R^\top R)^{-1} Q^\top(y-y^0) = (\Tb^\top \Tb)^{-1} \Tb^{\top}(y-y^0)$. This is a ${\mathrm{C}^\infty}$ function.
Conversely, the Jacobian matrix of the function $\eta(y)$ at $y=y^0$ is 
\begin{align*}
\frac{\partial \eta}{\partial y}(y^0) =  (\Tb^\top \Tb)^{-1} \Tb^{\top}
\end{align*}
and hence
\begin{align*}
\rank 
\frac{\partial \eta}{\partial y}(y^0) = \rank  (\Tb^\top \Tb)^{-1} \Tb^{\top} = m-q.
\end{align*}
Thus $\omega$ is defined in a neighborhood $\bar{N}_{y^0}$ of $y^0$ by the implicit function theorem and is $\mathrm{C}^\infty$.
The proof is completed by letting $N_{y^0} = \bar{N}_{y^0} \cap \tilde{N}_{y^0} \cap M$.
\item This is immediately obtained from the first statement.
\end{enumerate}
\end{proof}

Consider now the curve $\eta(t)$ and
let $y(t)=\chi(\eta(t))$. We differentiate the curve to obtain from \eqref{eq:imprep}
\[
     \dot{y}(t) = \Tb'_{y(t)}( \dot{y}(t))\eta(t) + \Tb(y(t))\dot{\eta}(t).
\]
From this we compute
\begin{equation}\label{eq:etaeq}
    \dot{\eta} = -\Tb^{\top}(\chi\circ\eta) \Tb_{\chi\circ\eta}'(f(\chi\circ\eta))\eta+\Tb^{\top}(\chi\circ\eta)f(\chi\circ\eta)
\end{equation}
where the original ODE is $\dot{y}=f(y)$. The method we propose takes one step as follows
\begin{enumerate}
\item Let $\eta_0=0$.
\item Take a step with any $p$th order method applied the ODE \eqref{eq:etaeq} using
$y^0=y^n$ in \eqref{eq:imprep}. The result is $\eta_1$.
\item Compute $y^{n+1}=\chi(\eta_1)$.
\end{enumerate}
We immediately obtain the next theorem from Lemma \ref{lem:atlas}, because the solution curve lies in $M$ and a $p$th order method is applied in a chart of $M$.
\begin{theorem}
Under the assumptions of Lemma \ref{lem:atlas}, the above scheme is of order $p$.
\end{theorem}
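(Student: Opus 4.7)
The plan is to reduce the claim to a standard statement about a $p$th order method applied to a smooth ODE on an open subset of $\mathbb{R}^{m-q}$, by using the chart $\chi$ from Lemma~\ref{lem:atlas} as a change of variables that conjugates the original flow on $M$ to the flow of \eqref{eq:etaeq}.

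First I would fix $y^0 = y^n$ and invoke Lemma~\ref{lem:atlas} to obtain a neighbourhood $N_{y^0}\subset M$ on which $\chi$ is a $\mathrm{C}^\infty$ diffeomorphism with $\chi(0)=y^0$. Because $H_1,\dots,H_q$ are first integrals of $\dot y=f(y)$, the exact solution $y(t)$ starting at $y^0$ satisfies $y(t)\in M$ for all sufficiently small $t$, so I can define $\eta^\star(t) = \chi^{-1}(y(t))$ on a small time interval, with $\eta^\star(0)=0$. Differentiating the defining relation $y(t)-y^0 = \Tb(y(t))\eta^\star(t)$ and substituting $\dot y(t)=f(y(t))$ gives exactly the right-hand side of \eqref{eq:etaeq} at $\eta^\star(t)$, so $\eta^\star$ is the exact solution of the reduced ODE \eqref{eq:etaeq} with initial value $0$.

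Next I would apply the standing assumption that the inner integrator used in Step~2 of the scheme is a method of order $p$. Applied to the $\mathrm{C}^\infty$ vector field on the right-hand side of \eqref{eq:etaeq} with initial value $\eta_0=0$, its local truncation error is $\eta_1-\eta^\star(h)=\mathcal{O}(h^{p+1})$ by the usual definition of order. Finally, I would push the estimate back to the ambient space via $\chi$: since $\chi$ is $\mathrm{C}^\infty$ on a neighbourhood of $0$, it is Lipschitz there, so
\[
y^{n+1} - y(t^n+h) = \chi(\eta_1) - \chi(\eta^\star(h)) = \mathcal{O}(\eta_1-\eta^\star(h)) = \mathcal{O}(h^{p+1}),
\]
which is the claim.

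The only subtle point, and therefore the one I expect to write out carefully, is the derivation that $\eta^\star$ solves \eqref{eq:etaeq}: one must differentiate $\Tb(y(t))\eta^\star(t)$ paying attention to the implicit dependence of $\Tb$ on $y$, solve for $\dot\eta^\star$ by multiplying by $\Tb^\top$ and using that $\Tb$ has orthonormal-like full column rank (so that $\Tb^\top \Tb$ is invertible, as already exploited in the proof of Lemma~\ref{lem:atlas}). Everything else is a routine combination of the chart construction from Lemma~\ref{lem:atlas} and the hypothesis that the underlying integrator has order $p$.
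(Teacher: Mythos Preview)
Your proposal is correct and follows essentially the same route as the paper, which gives only a one-sentence justification: the exact solution lies in $M$, and a $p$th order method is applied in a chart of $M$ furnished by Lemma~\ref{lem:atlas}. You have simply unpacked that sentence carefully, spelling out that $\eta^\star=\chi^{-1}\circ y$ solves \eqref{eq:etaeq} and that the Lipschitz continuity of $\chi$ transfers the $\mathcal{O}(h^{p+1})$ local error back to $\mathbb{R}^m$.
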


The main difficulty in this approach is the computation of the derivative map $\Tb_y'(\zeta)$ for arbitrary values of $y\in\mathbb{R}^m$ and $\zeta\in\mathbb{R}^m$.   This is needed explicitly in the integration algorithm, but may also be a useful tool in computing the coordinate map \eqref{eq:imprep}.
We may define $\Tb(y)$ as the last $m-q$ columns of the $m\times m$-matrix $Q(y)$ defined through a QR-decomposition where
$Y(y)=Q(y)R(y)$ and where we have used the shorthand notation
\begin{equation*}
   Y(y) = [\dg H_1(y^n,y),\ldots,\dg H_q(y^n,y)]
\end{equation*}

We realise the QR-decomposition by means of the Householder method, applying a sequence of $q$ elementary orthogonal transformations to the matrix $Y(y)$ as described in most elementary text books in numerical linear algebra, see e.g \cite{trefethen97nla}. Each transformation is of the form
\begin{equation*}
   Q_k = I - 2v_kv_k^\top,\qquad v_k\in\mathbb{R}^m,\ v_k^{\top} v_k=1,
\end{equation*}
and its aim is to eliminate all elements under the diagonal in the $k$th column of the matrix to which it is applied. 

In order to explain how we compute the derivative $Q'_y(\zeta) =: DQ(y,\zeta)$, we first review the Householder method.
\begin{tabbing}
xxx\=xxx\=xxxxxxxxxxxxxxxxxxxxxxxxxxxxxxxx\=xxx\=x \kill
$Y^{(1)} := Y$   \\
\textbf{for} $k=1:q$, \\
    \>$w_k= \Pi_k Y_k^{(k)}-\|\Pi_kY_k^{(k)}\|\mathrm{e}_k$  \\
     \> $v_k = \frac{w_k}{\|w_k\|}$  \\
   \> \textbf{for} $r=k:q$, \\
   \>    \> $Y_r^{(k+1)}=(I-2\,v_k v_k^{\top})Y_r^{(k)}$ \\
   \>\textbf{end}\\
   \textbf{end}
\end{tabbing}
where the following conventions have been used: 
\begin{itemize}
	\item $\|\cdot\|$ is the Euclidean norm.
	\item $Y^{(k+1)}=Q_kY^{(k)}$, $Y_r^{(k)}$ is column $r$ of $Y^{(k)}$.
	\item The projector $\Pi_k$ puts zeros in the first $k-1$ components and leaves the rest of the components unchanged when applied to a vector in $\mathbb{R}^m$.
	\item $\mathrm{e}_k$ is the $k$th canonical unit vector in $\mathbb{R}^m$.
\end{itemize}

The vectors $v_k$ computed in the algorithm contain all information needed to reconstruct the factor $Q$, whereas $R:=Y^{(q+1)}$. For simplicity, and to avoid the loss of regularity in $Q(y)$ viewed as a matrix valued function of $y$, we have here ignored the sign convention which is usually applied in the definition of $w_k$ \cite{trefethen97nla}. The idea is now to differentiate the variables in the algorithm with respect to $y$, writing for any object, say $X(y)$, its derivative as
$DX=DX(y,\zeta)=\left.\frac{\mathrm{d}}{\mathrm{d}\varepsilon}\right|_{\varepsilon=0}X(y+\epsilon\zeta)$ for any $y,\zeta\in\mathbb{R}^m$. The dependence on $y,\zeta$ will usually be suppressed in the notation when no confusion is at risk. Notice that $D$ commutes with $\Pi_k$ for any $k$.
The following recursion formulae are easily derived
\begin{align*}
Dw_k& = \Pi_kDY_k^{(k)}-\frac{(\Pi_kY_k^{(k)})^{\top} \Pi_kDY_k^{(k)}}{\|\Pi_kY_k^{(k)}\|}\mathrm{e}_k,\\
Dv_k &=\left(Dw_k - \frac{w_k^{\top}Dw_k}{\|w_k\|^2}\,w_k\right)\|w_k\|^{-1},\\
DY_r^{(k+1)}&=DY_r^{(k)}-2\left(
v_k^{\top}Y_r^{(k)}Dv_k + Dv_k^{\top}Y_r^{(k)}v_k + v_k^{\top}DY_r^{(k)}v_k
\right).
\end{align*}

The initial $DY^{(1)}=DY^{(1)}(y,\zeta)$ should be computed by differentiating the $q$ discrete gradients
$\dg H_1(p,y),\ldots,\dg H_q(p,y)$ with respect to $y$. When the discrete gradients are given by the AVF formula, we may derive the following expressions for the $r$th column of $DY^{(1)}$
$$
     DY_r^{(1)} (y,\zeta) =D\dg H_r(y,\zeta) = \left(\int_0^1 \xi\nabla^2 H_r(\xi y + (1-\xi)p)\mathrm{d}\xi\right)\cdot \zeta
$$
 where $\nabla^2 H_r$ is the Hessian of the integral $H_r$. The expression for the coordinate increment cases are given in the appendix. 

In the present application we only make use  of the $Q$-part of the QR-decomposition, thus we need only store $v_1,\ldots,v_q$ and
$Dv_1,\ldots,Dv_q$ for subsequent use. We may summarize the extended algorithm for computing these quantities as follows, using a 
Matlab inspired indexing notation where the submatrix $Y_{a:b,c:d}$ of $Y$ means
\begin{equation*}
	Y_{a:b,c:d}=
	\begin{pmatrix}
		Y_{a,c}&\cdots&Y_{a,d}\\
		\vdots&\ddots&\vdots\\
		Y_{b,c}&\cdots&Y_{b,d}
	\end{pmatrix}.
\end{equation*}

\begin{tabbing}
xxx\=xxxxxxxxxxxxx\=xxxxxxxxxxxxxxxxxxxxxxxxxxxxxxxxxxxxxxxxxxxxxxxxxxxxxxxxxxxx\kill
Given $Y$ and $DY$ as $m\times q$-matrices \\
\textbf{for} $k=1:q$  \\
\>$w=Y_{k:m,k}-\|Y_{k:m,k}\|\mathrm{e}_1$ \\[1mm]
\>$\displaystyle{Dw=DY_{k:m,k}-\frac{Y_{k:m,k}^{\top}DY_{k:m,k}}{\|Y_{k:m,k}\|}\mathrm{e}_1}$\\
\> $\tilde{v}_k=w/\|w\|$ \\
\> $\displaystyle{D\tilde{v}_k=(Dw-\frac{w^{\top}Dw}{\|w\|^2}\,w)\|w\|^{-1}}$\\[1mm]
\>$DY_{k:m,k+1:q}=DY_{k:m,k+1:q}-2\tilde{v}_k\tilde{v}_k^{\top}DY_{k:m,k+1:q}-
2D\tilde{v}_k\tilde{v}_k^{\top}Y_{k:m,k+1:q}$ \\[1mm]
\>\> $-2\tilde{v}_k\,D\tilde{v}_k^{\top}Y_{k:m,k+1:q}$\\[1mm]
\> $Y_{k:m,k+1:q}=Y_{k:m,k+1:q}-2\tilde{v}_k \tilde{v}_k^{\top} Y_{k:m,k+1:q}$ \\[1mm]
\textbf{end}
\end{tabbing}
The first $k-1$ entries of the vectors $v_{k}$, $Dv_{k}$ are zeros, and the remaining $m-k+1$ entries are
contained in $\tilde{v}_k$, $D\tilde{v}_k$ on exit. The complexity of this algorithm is $\mathcal{O}(mq^2+q^3)$.

 One should also note that we always multiply $Q$ ($DQ$ resp) by vectors $\bar{\eta}\in\mathbb{R}^m$ whose first $q$ columns are zero, this may be taken advantage of in the implementation. The procedure for computing $Q\bar{\eta}$ by means of $v_1,\ldots,v_k$ is described in
\cite[Algorithm 10.3]{trefethen97nla}, the cost is $\mathcal{O}(mq)$. We may extend this algorithm so that it computes also $DQ\,\bar{\omega}$
given $Dv_1,\ldots,Dv_q$. Defining the $m\times m$-matrix $P_k=Q_k\cdots Q_q,\ k=1,\ldots,q$, we get the downwards recursion
\begin{equation*}
P_{k-1} = Q_{k-1}P_k,\qquad P_q=Q_q,\quad P_1=Q=Q_1\cdots Q_q,
\end{equation*}
and differentiation yields
\begin{multline*}
DP_{k-1}=DQ_{k-1} P_k + Q_{k-1}DP_{k}\\=-2(Dv_{k-1}\,v_{k-1}^{\top}+v_{k-1}\,Dv_{k-1}^{\top}) P_{k}+ 
  (I-2v_{k-1}v_{k-1}^{\top})DP_{k}
\end{multline*}
The following algorithm results for computing $\phi=DQ\,\bar{\omega}$
\begin{tabbing}
xxx\=xxx\=xxxxxxxxxxxxxxxxxxxxxxxxxxxxxxxxxxxxxxxxxxxxxxx\kill
$\psi=\bar{\omega},\ \phi=0$\\
\textbf{for} $k=q:-1:1$ \\
\> $\phi=\phi-2(v_k^{\top}\psi\, Dv_{k}+Dv_k^{\top}\psi\, v_k+v_k^{\top}\phi\,v_k)$\\
 \> $\psi=\psi-2v_k^{\top} \psi\, v_k$\\
 \textbf{end}
\end{tabbing}
The complexity of this algorithm is $\mathcal{O}(mq)$.

\section{Numerical integration of the Kepler problem}\label{se:numex}
	The Kepler two-body problem describes the motion of two bodies which attract each other. By placing the first body in the origin, the position $(y_1,y_2)$ and the velocity $(y_3,y_4)$ of the other body are given by the following four-dimensional ODE
	\begin{align}
	\begin{split}
		\dot{y}_1&=y_3,\\
		\dot{y}_2&=y_4,\\
		\dot{y}_3&=-\frac{y_1}{(y_1^2+y_2^2)^{3/2}},\\			
		\dot{y}_4&=-\frac{y_2}{(y_1^2+y_2^2)^{3/2}}.
	\end{split}
	\label{eq:kepler}
	\end{align}
	This system preserves the Hamiltonian
	\begin{align*}
		H_1(y)&=\frac12\left(y_3^2+y_4^2\right)-\frac1{\sqrt{y_1^2+y_2^2}},
		\intertext{the angular momentum}
		H_2(y)&=y_1y_4-y_2y_3,
		\intertext{and the Runge-Lenz-Pauli vector}
		H_3(y)&=y_2y_3^2-y_1y_3y_4-\frac{y_2}{\sqrt{y_1^2+y_2^2}},\\
		H_4(y)&=y_1y_4^2-y_2y_3y_4-\frac{y_1}{\sqrt{y_1^2+y_2^2}}.
	\end{align*}

	Since $q=m=4$ any subset of three out of the four invariants is dependent. We want to compare schemes that preserve none, one, two, and all of the invariants above. 
	We use the projection method \eqref{scm:A} with the standard fourth order explicit Runge-Kutta method as the underlying scheme. The discrete gradients are calculated using \eqref{eq:symitohabe}. The schemes that preserve one of $H_1,H_3$ are denoted as RK4Proj1 and RK4Proj3, respectively. 
	The scheme that preserves both $H_1$ and $H_3$ is called RK4Proj13. The original Runge-Kutta scheme preserves neither and is denoted RK4.
	 The RK123Proj scheme is omitted from the plot since it produces exactly the ellipsis of the Kepler problem.
	 
	The resulting plots from these methods in Figure \ref{fi:keplersolution} are arranged according to the table
	\begin{table}[!ht]
		\centering
		\begin{tabular}{|c|c|}
			\hline
			RK4&RK4Proj1\\
			\hline
			RK4Proj3&RK4Proj13\\
			\hline
		\end{tabular}
		\caption{The location of the schemes in the plots of Figure \ref{fi:keplersolution}.}
		\label{ta:schms}
	\end{table}
	
	The initial values are taken from section I.2.3 of \cite{hairer06gni},
	\begin{equation*}
		y_1^0=1-e,\quad y_2^0=0,\quad y_3^0=0,\quad y_4^0=\sqrt{\frac{1+e}{1-e}},
	\end{equation*}
	where the eccentricity is $e=0.6$ and the exact solution has period $2\pi$. The time step is $h=0.2$ and we integrate for 50000 steps.
	
	Figure \ref{fi:keplersolution} shows the numerical solutions. RK4 spirals inwards until it eventually blows up. RK4Proj1 has a counterclockwise precession effect. The Runge-Lenz-Pauli vector has to do with the orientation of the ellipse and RK4Proj3 does therefore not exhibit this effect, it will however converge to a smaller circle around the origin. The solution of RK4Proj13 shows an improvement compared to RK4Proj1 and RK4Proj3. 
	
This example illustrates that there are cases where the preservation of one or more invariants are important to get a numerical solution with good long term properties. Not surprisingly, one observes a gradual improvement in the quality of the solution as the number of preserved first integrals increases.
 The extra computational effort needed to preserve multiple integrals compared to one is almost negligible, in this example the computation took less than 10\% longer.

\begin{figure}
		\centering
		\includegraphics[width=\textwidth]{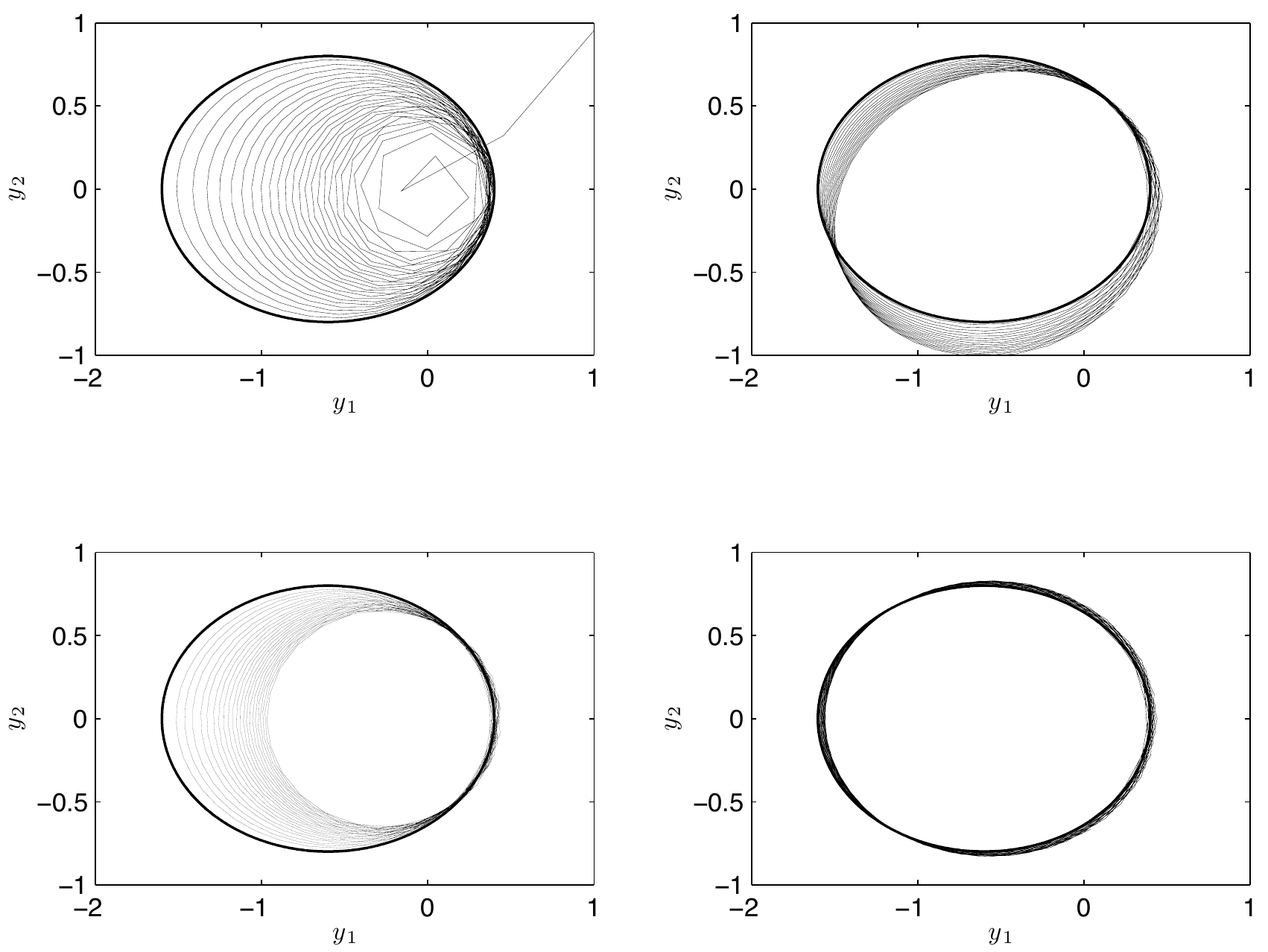}
		\caption{The numerical solution (thin line) of the Kepler problem \eqref{eq:kepler} using the schemes of Table \ref{ta:schms} with $h=0.2$. The first 500 steps are shown. The exact solution (thick line) is an ellipse with eccentricity $e=0.6$. }
		\label{fi:keplersolution}
	\end{figure}
	
 In Figure \ref{fi:keplerorder_2358} we plot the global error of four schemes (RK2Proj123, RK4Proj123, RK5Proj123, and RK7Proj123) that preserve the four invariants. The underlying schemes are four RK-schemes of order 2, 4, 5, and 7. We see that the schemes attain the order of the underlying scheme, which is what we proved in Theorem \ref{thm:A}. 	
 	
	
	\begin{figure}
		\centering
		\includegraphics[width=\textwidth]{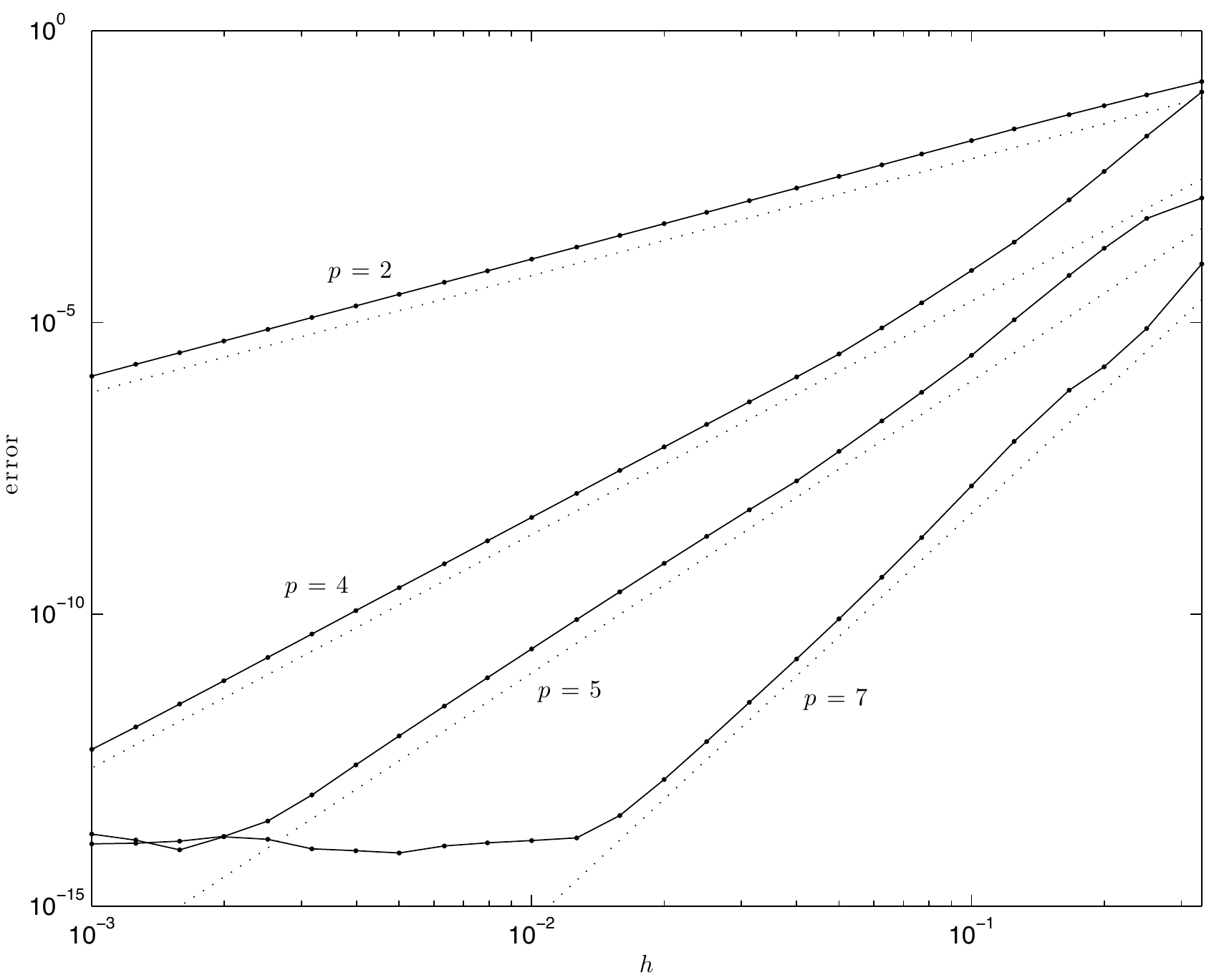}
		\caption{The global error of the four schemes RK2Proj123, RK4Proj123, RK5Proj123, and RK7Proj123. The dotted lines are reference lines of exact order.}
		\label{fi:keplerorder_2358}
	\end{figure}
Figure  \ref{fi:comparisonplot_mid} shows a comparison between a standard orthogonal projection method as defined at the end of Section \ref{se:proj} and the projection method \eqref{scm:A}. Both preserve $H_1$ and $H_2$ simultaneously and use the implicit midpoint method as the underlying scheme. Figure \ref{e06h01} shows that our projection method is more accurate for $e=0.6$ while Figure \ref{e07h005} shows that the standard projection method is more accurate for $e=0.7$. 
In the standard projection method one has to compute the distance to the underlying manifold, usually denoted by the Lagrange multiplier $\lambda$, however for our projection scheme this is already (implicitly) known. The resulting nonlinear system will have dimension $m+q$ (see section IV.4 in \cite{hairer06gni}) compared to $m$ for our proposed method. Our implementation uses the same nonlinear solver (Matlab's fsolve) for both methods. In Figure \ref{e07h005h0075} we have adjusted $h$ such that both schemes have the same computation time, in which case one sees that the new method is slightly better than the standard projection method even for $e=0.7$.
\begin{figure}
	\centering
	\subfloat[$e=0.6$ and $h=0.1$.]{\label{e06h01}\includegraphics[width=\textwidth]{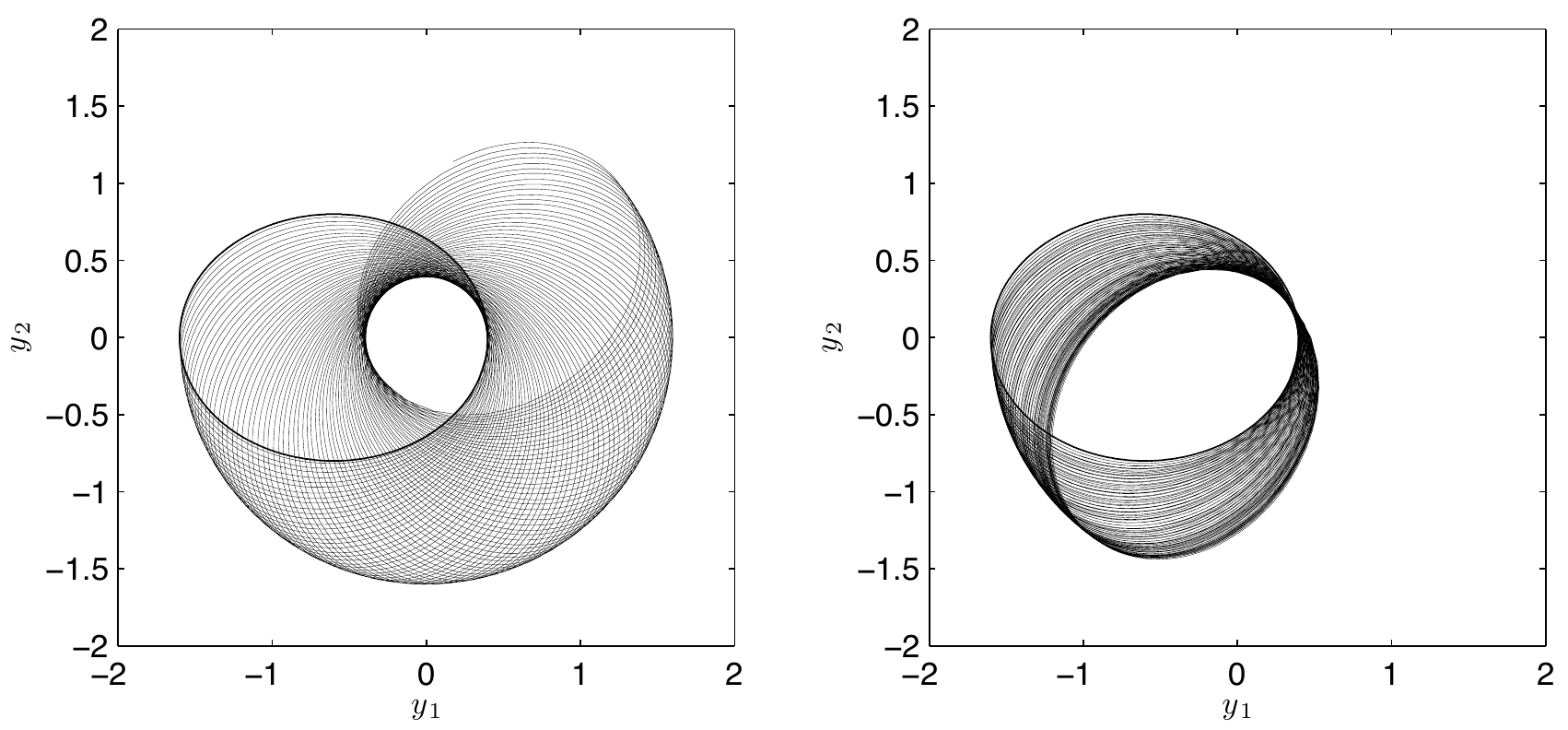}}\\
	\subfloat[$e=0.7$ and $h=0.05$.]{\label{e07h005}\includegraphics[width=\textwidth]{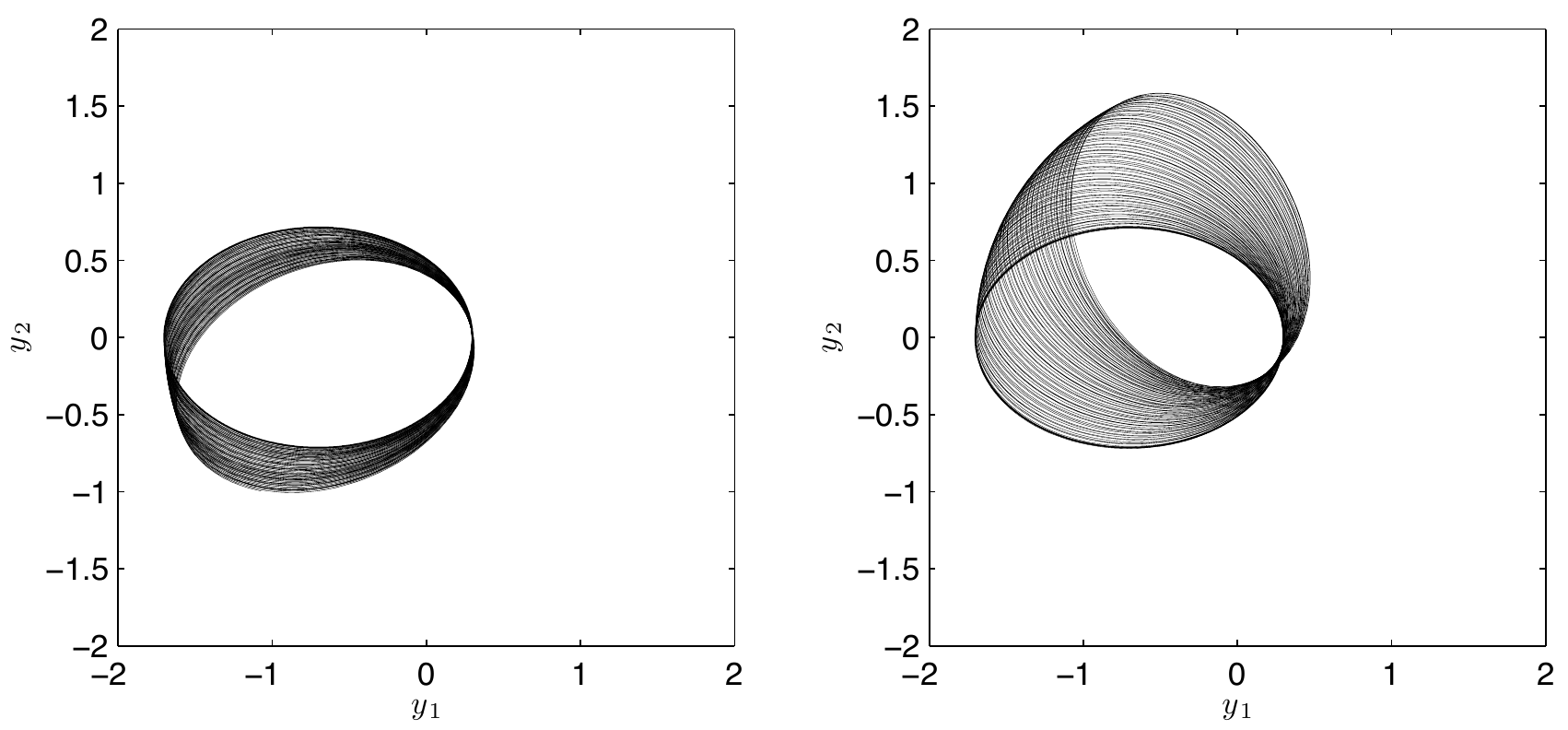}}\\
	\subfloat[$e=0.7$ and $h=0.075$ (left) $h=0.05$ (right).]{\label{e07h005h0075}\includegraphics[width=\textwidth]{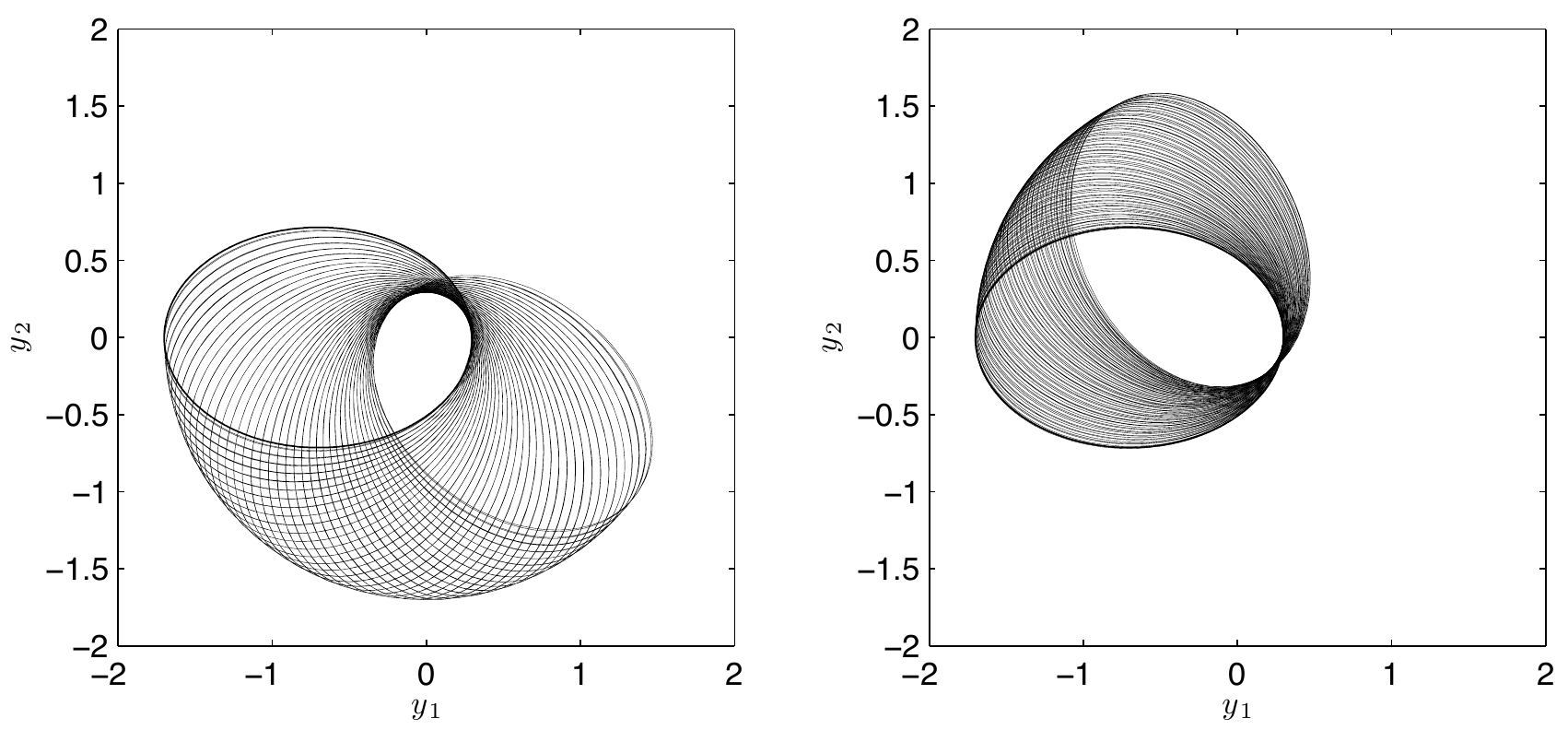}}	
	\caption{The numerical solution (thin line) of the Kepler problem \eqref{eq:kepler} using the midpoint method with standard orthogonal projection (left) and \eqref{scm:A} with the midpoint method as the underlying scheme. The exact solution (thick line) is an ellipse with eccentricity $e$. Both schemes preserves $H_1$ and $H_2$ exactly. In the last row (c) the step size $h$ is adjusted such that both schemes have the same computation time. }
	\label{fi:comparisonplot_mid}
\end{figure}

\paragraph{Conclusion and further work.}  We have presented a new methodology for preserving multiple first integrals in systems of ordinary differential equations, using discrete gradients as the underlying tool. By using the notion of a discrete tangent space, two methodologies for designing numerical schemes are easily derived, projection and local coordinates. The resulting algorithms are relatively inexpensive compared to well-known algorithms preserving precisely one first integral.  
There are of course several other well known methods for preserving multiple invariants, but we believe that the new methods which are based on discrete gradients rather than exact ones may be attractive for certain problems. Symmetric schemes are easily constructed, no Lagrange multipliers are required, and the schemes can be seen as natural generalizations of the popular discrete gradient methods.
Although the present paper considers only systems of ordinary differential equations, the approach taken may be easily adapted to partial differential equations to be considered in future work.
	
\paragraph{Acknowledgements}
We are grateful to the anonymous referees for helpful comments and references.
	
\appendix
\section{Derivative of discrete gradients of the Coordinate Increment type}
We here give the expression for the derivative map of the discrete gradients defined in terms of the coordinate increment method of 
Itoh and Abe \cite{itoh88hcd}.
We write \eqref{eq:itohabe} in compact notation as
\begin{equation*}
 (\dgci H(v,u))_i = \frac{H(u|_iv)-H(u|_{i-1}v)}{u_i-v_i},\quad 1\leq i\leq m
\end{equation*}
The jacobian of this map with respect to its second argument is then simply the lower-triangular matrix with elements
\begin{equation*}
(D\dgci H(v,u))_{ij} =\left\{ 
\begin{array}{ll}
  \displaystyle{  \frac{\frac{\partial}{\partial u_j}(H(u |_i v)-H(u |_{i-1} v))}{u_i-v_i}},& j<i\\[3mm]
  \displaystyle{ \frac{ \frac{\partial H}{\partial u_i}(u|_i v)}{u_i-v_i}-
   \frac{H(u |_i v) - H(u |_{i-1}v)}{(u_i-v_i)^2}},& j=i \\[3mm]
  0& j>i
\end{array}
\right.
\end{equation*}
Similarly, we can compute the Jacobian of the symmetrised version as follows
\begin{equation*}
(D\dgsci H(v,u))_{ij} =
\begin{cases}
	\frac{1}{2}  \frac{\frac{\partial}{\partial u_j}(H(u |_i v)-H(u |_{i-1} v))}{u_i-v_i},&j<i\\
	\frac12\frac{ \frac{\partial}{\partial u_i}(H(u|_i v)+H(v|_{i-1}u))}{u_i-v_i}+
   	\frac12\frac{H(v|_i u)+H(u |_{i-1}v)-H(v|_{i-1}u)-H(u |_i v)}{(u_i-v_i)^2},& j=i\\
    	\frac{1}{2} \displaystyle{  \frac{\frac{\partial}{\partial u_j}(H(v |_{i-1} u)-H(v |_{i} u))}{u_i-v_i}},& j>i.
\end{cases}
\end{equation*}

 \bibliographystyle{plain}
 \bibliography{dgm}

\def\cprime{$'$}
\begin{thebibliography}{10}

\bibitem{celledoni02aco}
E.~Celledoni and B.~Owren.
\newblock A class of intrinsic schemes for orthogonal integration.
\newblock {\em SIAM J. Numer. Anal.}, 40(6):2069--2084 (electronic) (2003),
  2002.

\bibitem{cieslinski07}
J.~L. Cie{\'s}li{\'n}ski.
\newblock An orbit-preserving discretization of the classical {K}epler problem.
\newblock {\em Phys. Lett. A}, 370(1):8--12, 2007.

\bibitem{cieslinski10}
J.~L. Cie{\'s}li{\'n}ski.
\newblock Comment on `{C}onservative discretizations of the {K}epler motion'.
\newblock {\em J. Phys. A}, 43(22):228001, 4, 2010.

\bibitem{courant28udp}
R.~Courant, K.~Friedrichs, and H.~Lewy.
\newblock \"{U}ber die partiellen {D}ifferenzengleichungen der mathematischen
  {P}hysik.
\newblock {\em Math. Ann.}, 100(1):32--74, 1928.

\bibitem{gonzalez96tia}
O.~Gonzalez.
\newblock Time integration and discrete {H}amiltonian systems.
\newblock {\em J. Nonlinear Sci.}, 6(5):449--467, 1996.

\bibitem{hairer06gni}
E.~Hairer, C.~Lubich, and G.~Wanner.
\newblock {\em Geometric numerical integration}, volume~31 of {\em Springer
  Series in Computational Mathematics}.
\newblock Springer-Verlag, Berlin, second edition, 2006.
\newblock Structure-preserving algorithms for ordinary differential equations.

\bibitem{itoh88hcd}
T.~Itoh and K.~Abe.
\newblock Hamiltonian-conserving discrete canonical equations based on
  variational difference quotients.
\newblock {\em J. Comput. Phys.}, 76(1):85--102, 1988.

\bibitem{kozlov07cdo}
R.~Kozlov.
\newblock Conservative discretizations of the {K}epler motion.
\newblock {\em J. Phys. A}, 40(17):4529--4539, 2007.

\bibitem{labudde74dma}
R.~A. LaBudde and D.~Greenspan.
\newblock Discrete mechanics---a general treatment.
\newblock {\em J. Computational Phys.}, 15:134--167, 1974.

\bibitem{labudde76eai}
R.~A. LaBudde and D.~Greenspan.
\newblock Energy and momentum conserving methods of arbitrary order of the
  numerical integration of equations of motion. {I}. {M}otion of a single
  particle.
\newblock {\em Numer. Math.}, 25(4):323--346, 1975/76.

\bibitem{labudde76eat}
R.~A. LaBudde and D.~Greenspan.
\newblock Energy and momentum conserving methods of arbitrary order for the
  numerical integration of equations of motion. {II}. {M}otion of a system of
  particles.
\newblock {\em Numer. Math.}, 26(1):1--16, 1976.

\bibitem{mclachlan99giu}
R.~I. McLachlan, G.~R.~W. Quispel, and N.~Robidoux.
\newblock Geometric integration using discrete gradients.
\newblock {\em R. Soc. Lond. Philos. Trans. Ser. A Math. Phys. Eng. Sci.},
  357(1754):1021--1045, 1999.

\bibitem{minesaki04anc}
Y.~Minesaki and Y.~Nakamura.
\newblock A new conservative numerical integration algorithm for the
  three-dimensional {K}epler motion based on the {K}ustaanheimo-{S}tiefel
  regularization theory.
\newblock {\em Phys. Lett. A}, 324(4):282--292, 2004.

\bibitem{minesaki06nni}
Y.~Minesaki and Y.~Nakamura.
\newblock New numerical integrator for the {S}t\"ackel system conserving the
  same number of constants of motion as the degree of freedom.
\newblock {\em J. Phys. A}, 39(30):9453--9476, 2006.

\bibitem{MR1142054}
F.~A. Potra and W.~C. Rheinboldt.
\newblock On the numerical solution of {E}uler-{L}agrange equations.
\newblock {\em Mech. Structures Mach.}, 19(1):1--18, 1991.

\bibitem{MR1142057}
F.~A. Potra and J.~Yen.
\newblock Implicit numerical integration for {E}uler-{L}agrange equations via
  tangent space parametrization.
\newblock {\em Mech. Structures Mach.}, 19(1):77--98, 1991.

\bibitem{MR2451073}
G.~R.~W. Quispel and D.~I. McLaren.
\newblock A new class of energy-preserving numerical integration methods.
\newblock {\em J. Phys. A}, 41(4):045206, 7, 2008.

\bibitem{quispel99son}
G.R.W. Quispel and H.~Capel.
\newblock Solving {ODE}'s numerically while preserving all first integrals,
  1999.
\newblock Preprint
  (http://www.latrobe.edu.au/mathstats/\\staff/quispel/quispel/Publ55\_Solving\%20ODE's\%20numerically.pdf).

\bibitem{MR1187632}
J.~C. Simo, N.~Tarnow, and K.~K. Wong.
\newblock Exact energy-momentum conserving algorithms and symplectic schemes
  for nonlinear dynamics.
\newblock {\em Comput. Methods Appl. Mech. Engrg.}, 100(1):63--116, 1992.

\bibitem{stiefel71lar}
E.~L. Stiefel and G.~Scheifele.
\newblock {\em Linear and regular celestial mechanics. {P}erturbed two-body
  motion, numerical methods, canonical theory}.
\newblock Springer-Verlag, New York, 1971.
\newblock Die Grundlehren der mathematischen Wissenschaften, Band 174.

\bibitem{trefethen97nla}
L.~N. Trefethen and D.~Bau.
\newblock {\em Numerical linear algebra}.
\newblock Society for Industrial and Applied Mathematics (SIAM), Philadelphia,
  PA, 1997.

\end{thebibliography}

\end{document}